\newtheorem{proposition}{Proposition}
\newtheorem{theorem}{Theorem}
\newtheorem{remark}{Remark}
\begin{document}

\title{Well-Posedness and Numerical Approximation of a Class of Nonlocal Elliptic Equations with Gaussian Kernels}

\author{Dragos-Patru Covei}
\affil{Department of Applied Mathematics, The Bucharest University of Economic Studies, Piata Romana, No. 6, 010374, Bucharest, Romania \\ \texttt{dragos.covei@csie.ase.ro}}

\date{\today}

\maketitle

\begin{abstract}
This paper investigates the mathematical properties and numerical approximation of a class of nonlocal elliptic partial differential equations of the form
\begin{equation*}
-\Delta u + \lambda \, G(u) = f,
\end{equation*}
where $\Delta$ denotes the Laplacian, $\lambda > 0$ is a regularization parameter, and $G$ is a nonlocal operator defined by integral convolution with a kernel $K$. We establish the well-posedness of the problem in the Sobolev space $H_0^1(\Omega)$ using the Lax--Milgram theorem, providing rigorous proofs for the existence, uniqueness, and positivity of the weak solution under standard assumptions on the kernel $K$ and the source term $f \in L^2(\Omega)$. For the numerical treatment, we employ a finite difference discretization for the Laplacian and a Gaussian-based approximation for the nonlocal term. We analyze a fixed-point iterative scheme for solving the discrete system and derive explicit conditions for its convergence and stability. Numerical experiments validate the theoretical results, demonstrating the monotonic decay of the residual and the robustness of the approximation scheme on bounded domains with various padding strategies.
\end{abstract}

\noindent \textbf{Keywords:} Nonlocal Elliptic Equations, Variational Methods, Well-Posedness, Fixed-Point Iteration, Numerical Stability, Gaussian Kernels. \\
\noindent \textbf{MSC[2020]:} 35J20, 35R09, 65N06, 65N12.



\section{Introduction}

Nonlocal partial differential equations (PDEs) have emerged as a powerful tool in mathematical physics, material science, and data analysis, owing to their ability to model interactions over a finite range. While classical diffusion models rely on local operators like the Laplacian, many physical processes exhibit nonlocal behavior where the state at a point depends on the values in a neighborhood. This work focuses on a class of nonlocal elliptic problems defined by the equation
\begin{equation}
-\Delta u(z) + \lambda \, G(u)(z) = f(z), \quad z \in \Omega,
\label{eq:PDE}
\end{equation}
subject to homogeneous Dirichlet boundary conditions on $\partial \Omega$. Here, $\Omega \subset \mathbb{R}^n$ is a bounded domain, $\lambda > 0$ is a regularization parameter, and $G(u)$ denotes a nonlocal operator, typically realized via convolution with a Gaussian kernel.

The mathematical study of nonlocal operators has a rich history. Integral equations and integro-differential equations have been extensively explored to bridge the gap between local and global regularities. As noted in~\cite{AlvesCovei2015}, existence results for nonlocal elliptic problems can often be established via sub-supersolution techniques, providing a robust framework for analyzing more complex models. The convergence of local approximations to nonlocal limits, as well as the regularity of solutions in Sobolev spaces, remains a central theme in modern analysis~\cite{Brezis2010}.

From a variational perspective, equation~\eqref{eq:PDE} can be derived as the Euler--Lagrange equation of an energy functional that balances a local Dirichlet energy with a nonlocal interaction term. Such models are closely related to nonlocal total variation and other regularized inverse problems, where nonlocal averaging facilitates the preservation of structural coherence in the presence of perturbations. The choice of the nonlocal operator $G(u)$ is crucial; Gaussian kernels are particularly attractive due to their smoothing properties and well-defined spectral characteristics.

In this paper, we provide a unified treatment of the problem~\eqref{eq:PDE}, covering both the analytical foundation and the numerical realization. Our contributions are as follows:
\begin{itemize}
\item We provide a rigorous proof of existence and uniqueness of the weak solution in $H_0^1(\Omega)$ using the Lax--Milgram theorem.
\item We establish a positivity principle for the nonlocal operator $G$, ensuring that the solution inherits the nonnegativity of the source term $f$.
\item We formalize a discretization strategy that utilizes \emph{extension (padding) operators} to ensure consistency near boundaries, which is a critical aspect often overlooked in the numerical treatment of nonlocal terms.
\item We analyze a fixed-point iterative method for solving the resulting system, deriving explicit stability conditions that link the step size, the regularization parameter, and the Poincar\'e constant of the domain.
\end{itemize}

The rest of the paper is organized as follows. Section~\ref{II} presents the mathematical formulation and the well-posedness analysis. Section~\ref{sec:padding} details the extension operators and their role in consistent boundary handling. Section~\ref{III} describes the numerical methods and provides a detailed analysis of the iterative solver. Section~\ref{IV} concludes the paper with a summary of the findings.

\section{Mathematical Formulation\label{II}}

\subsection{Preliminaries and Classical Results}

Let $\Omega \subset \mathbb{R}^{2}$ be a bounded, open, connected domain
with Lipschitz boundary $\partial \Omega$. We consider the Sobolev space 
\begin{equation*}
H_{0}^{1}(\Omega )=\{u\in H^{1}(\Omega )\mid u|_{\partial \Omega }=0\},
\end{equation*}%
comprising functions whose weak derivatives are in $L^{2}(\Omega )$ and
which vanish on the boundary in the trace sense.

Since $\Omega$ is bounded and possesses a Lipschitz boundary, the Sobolev
embedding theorem guarantees $H_{0}^{1}(\Omega) \hookrightarrow
L^{p}(\Omega) $ for all $p \in [1, \infty)$ when $n = 2$. Notably, $%
H_{0}^{1}(\Omega) \subset L^{2}(\Omega)$, rendering the intersection $%
H_{0}^{1}(\Omega) \cap L^{2}(\Omega)$ redundant.

The Laplacian $\Delta $ is defined in the weak sense on $H_{0}^{1}(\Omega )$%
. In particular, by integration by parts (and using the homogeneous
Dirichlet boundary conditions) one obtains 
\begin{equation}
\langle u,\Delta u\rangle =-\int_{\Omega }|\nabla u|^{2}\,dx,
\label{adjoint}
\end{equation}%
showing that $\Delta $ is self-adjoint and (negative) definite. A standard
tool we will use is the \textbf{Poincar\'{e} Inequality}:

\begin{proposition}
\label{pi} There exists a constant $C_{P} > 0$ (depending only on $\Omega$)
such that for every $w \in H_{0}^{1}(\Omega)$, 
\begin{equation}
\| w \|_{L^{2}(\Omega)} \leq C_{P} \| \nabla w \|_{L^{2}(\Omega)}.
\label{eq:poincare}
\end{equation}
\end{proposition}

Our proof of the main result is grounded on the following well-established
Lax-Milgram Theorem (see, e.g., \cite{Evans1998,Brezis2010}):

\begin{proposition}[Lax-Milgram]
\label{laxmil} Assume $\Omega \subset \mathbb{R}^{n}$ is a bounded, open,
connected domain with Lipschitz boundary. Let $H_{0}^{1}(\Omega )$ be
defined as above and let $a(\cdot ,\cdot ):H_{0}^{1}(\Omega )\times
H_{0}^{1}(\Omega )\rightarrow \mathbb{R}$ be a bilinear form. Assume:

\begin{itemize}
\item \textbf{Continuity:} There exists $C>0$ such that%
\begin{equation*}
|a(u,v)|\leq C\Vert u\Vert _{H_{0}^{1}(\Omega )}\Vert v\Vert
_{H_{0}^{1}(\Omega )}\quad \forall \,u,v\in H_{0}^{1}(\Omega ).
\end{equation*}

\item \textbf{Coercivity:} There exists $\alpha >0$ such that%
\begin{equation*}
a(u,u)\geq \alpha \Vert u\Vert _{H_{0}^{1}(\Omega )}^{2}\quad \forall \,u\in
H_{0}^{1}(\Omega ).
\end{equation*}
\end{itemize}

Then, for every continuous linear functional $L:H_{0}^{1}(\Omega
)\rightarrow \mathbb{R}$, there exists a unique $u\in H_{0}^{1}(\Omega )$
such that%
\begin{equation*}
a(u,v)=L(v)\quad \forall \,v\in H_{0}^{1}(\Omega ).
\end{equation*}
\end{proposition}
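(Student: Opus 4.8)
The plan is to reduce the variational equation $a(u,v)=L(v)$ to a single operator equation in the Hilbert space $H:=H_0^1(\Omega)$, equipped with its inner product $\langle\cdot,\cdot\rangle$ and induced norm $\|\cdot\|$, and then to invert that operator. First I would fix $u\in H$ and note that $v\mapsto a(u,v)$ is a continuous linear functional on $H$, so the Riesz representation theorem produces a unique element $Au\in H$ with $a(u,v)=\langle Au,v\rangle$ for all $v\in H$. Bilinearity of $a$ together with the uniqueness clause in Riesz shows that $A:H\to H$ is linear, and the continuity bound $|a(u,v)|\le C\|u\|\,\|v\|$ gives $\|Au\|\le C\|u\|$, so $A$ is bounded. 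Representing $L$ in the same way as $L(v)=\langle f,v\rangle$ for a unique $f\in H$, the problem becomes equivalent to solving $Au=f$, and it remains only to show that $A$ is a bijection.

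Injectivity and a quantitative lower bound both follow directly from coercivity: since $\alpha\|u\|^2\le a(u,u)=\langle Au,u\rangle\le\|Au\|\,\|u\|$, we obtain $\|Au\|\ge\alpha\|u\|$, which forces $A$ to be injective and, by a standard Cauchy-sequence argument applied to this lower bound, to have closed range $R(A)$. The main obstacle is surjectivity, i.e. proving $R(A)=H$ rather than merely a proper closed subspace. I would dispatch this with an orthogonality argument: if $w\perp R(A)$, then in particular $\langle Aw,w\rangle=0$, and coercivity gives $\alpha\|w\|^2\le\langle Aw,w\rangle=0$, so $w=0$. Hence $R(A)^\perp=\{0\}$, and combined with closedness this yields $R(A)=H$. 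Therefore $A^{-1}$ exists and $u=A^{-1}f$ is the unique solution of $a(u,v)=L(v)$.

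An alternative route to the same conclusion, bypassing the orthogonal-complement step, is the contraction-mapping approach: for a parameter $\rho>0$ define $T_\rho v=v-\rho(Av-f)$ and estimate $\|T_\rho v_1-T_\rho v_2\|^2\le(1-2\rho\alpha+\rho^2C^2)\|v_1-v_2\|^2$. Choosing $\rho\in(0,2\alpha/C^2)$ makes the contraction factor strictly less than one, so the Banach fixed-point theorem supplies a unique fixed point $u$ satisfying $Au=f$. Either way, the decisive ingredient is coercivity, which simultaneously furnishes uniqueness and the lower bound $\|Au\|\ge\alpha\|u\|$ needed to close the surjectivity step; continuity serves only to guarantee that the Riesz representatives $A$ and $f$ are well defined and that $A$ is bounded.
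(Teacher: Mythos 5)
Your proof is correct and complete: the reduction via Riesz representation to the operator equation $Au=f$, the lower bound $\Vert Au\Vert \geq \alpha \Vert u\Vert$ giving injectivity and closed range, and the orthogonality argument for surjectivity together constitute the standard proof of the Lax--Milgram theorem, and your alternative contraction-mapping route with $\rho \in (0, 2\alpha/C^{2})$ is likewise sound. Note, however, that the paper does not prove this proposition at all; it states it as a classical result and cites Evans and Brezis, whose textbook proofs are essentially the argument you have reproduced, so there is no divergence of method to report.
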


\subsection{Analytical Requirements for the Nonlocal Operator $G(u)$}

In \eqref{eq:PDE}, the term $-\Delta u$ represents the classical Laplacian,
which promotes regularity by penalizing abrupt spatial intensity variations.
The second term involves the nonlocal operator 
\begin{equation}
G(u)(z) = \int_{\Omega} K(z,\theta) \, u(\theta) \, d\theta,
\label{eq:G_def}
\end{equation}
with the kernel $K: \Omega \times \Omega \to (0,\infty)$ satisfying the
following conditions:

\begin{enumerate}
\item[(i)] \textbf{Integrability and Symmetry:} $K \in L^{1}(\Omega \times
\Omega)$ and $K(z,\theta) = K(\theta,z)$ a.e.

\item[(ii)] \textbf{Uniform Boundedness:} There exists $M \in (0,\infty)$
such that 
\begin{equation}
\sup_{z \in \Omega} \int_{\Omega} |K(z,\theta)| \, d\theta \leq M.
\label{eq:K_bound}
\end{equation}
\end{enumerate}

The operator $G(u)$ enforces nonlocal smoothing. A typical choice is the
Gaussian convolution: 
\begin{equation}
K(x,y;\xi,\eta) = \frac{1}{2\pi\sigma^{2}} \exp\!\left( -\frac{(x-\xi)^{2} +
(y-\eta)^{2}}{2\sigma^{2}} \right),  \label{eq:gaussian_kernel}
\end{equation}
which is positive, smooth, and (on $\mathbb{R}^{2}$) normalized: 
\begin{equation*}
\int_{\mathbb{R}^{2}}\frac{1}{2\pi \sigma ^{2}}\exp \!\left( -\frac{%
z_{1}^{2}+z_{2}^{2}}{2\sigma ^{2}}\right) dz_{1}dz_{2}=1.
\end{equation*}%
For a sufficiently large bounded domain $\Omega $, one has

\begin{equation*}
\int_{\Omega }K(x,y;\xi ,\eta )\,d\xi \,d\eta \approx 1,
\end{equation*}%
so that $M\approx 1$ in \eqref{eq:K_bound}.

\paragraph{Remark on Kernel Normalization.}
In the following, $G(u)$ is realized via a Gaussian kernel defined on $\Omega$. For a bounded domain $\Omega$, the normalization $\int_\Omega K(z,\theta)d\theta$ may vary with $z$. However, if $\Omega$ is large compared to the support of the kernel, $M \approx 1$ in \eqref{eq:K_bound}, and the operator behaves as a standard mollifier.

\begin{proposition}[Boundedness and Self-adjointness]
\label{prop:nonlocal} Let $G:L^{2}(\Omega )\rightarrow L^{2}(\Omega )$ be
defined by \eqref{eq:G_def} with $K$ satisfying (i)-(ii). Then, for every $%
u\in L^{2}(\Omega )$,%
\begin{equation*}
\Vert G(u)\Vert _{L^{2}(\Omega )}\leq M\,\Vert u\Vert _{L^{2}(\Omega )},
\end{equation*}
and $G$ is a bounded self-adjoint operator on $L^{2}(\Omega)$.
\end{proposition}

\begin{proof}
\textbf{Boundedness.} For a.e.\ $(x,y)\in \Omega $,%
\begin{equation*}
|G(u)(x,y)|\leq \int_{\Omega }K(x,y;\xi ,\eta )\,|u(\xi ,\eta )|\,d\xi
\,d\eta .
\end{equation*}%
By Schur's test,%
\begin{equation*}
\Vert G(u)\Vert _{L^{2}(\Omega )}\leq \left( \sup_{(x,y)\in \Omega
}\int_{\Omega }K(x,y;\xi ,\eta )\,d\xi \,d\eta \right) \Vert u\Vert
_{L^{2}(\Omega )}\leq M\Vert u\Vert _{L^{2}(\Omega )}.
\end{equation*}

\textbf{Self-adjointness.} For $u,v\in L^{2}(\Omega )$, 
\begin{align*}
\langle G(u),v\rangle _{L^{2}(\Omega )}& =\int_{\Omega }\!\int_{\Omega
}K(z,\theta )\,u(\theta )\,v(z)\,d\theta \,dz \\
& =\int_{\Omega }\!\int_{\Omega }K(\theta ,z)\,u(z)\,v(\theta )\,dz\,d\theta
\quad \text{(by symmetry of $K$)} \\
& =\langle u,G(v)\rangle _{L^{2}(\Omega )}.
\end{align*}%
Thus $G$ is self-adjoint, with $\Vert G\Vert \leq M$.
\end{proof}

\subsection{Weak Formulation and Well-Posedness}

We consider the PDE 
\begin{equation}
-\Delta u+\lambda \,G(u)=f\quad \text{in }\Omega ,  \label{eq:pde}
\end{equation}%
equipped with homogeneous Dirichlet boundary conditions $u=0$ on $\partial
\Omega $. Here $\lambda >0$ is a regularization parameter and $f$ denotes
the degraded input image. Multiplying \eqref{eq:pde} by a test function $%
v\in H_{0}^{1}(\Omega )$ and integrating over $\Omega $ yields the weak
formulation: 
\begin{equation*}
a(u,v)=L(v),
\end{equation*}%
where the bilinear form is 
\begin{equation}
a(u,v)=\int_{\Omega }\nabla u\cdot \nabla v\,dx\,dy+\lambda \int_{\Omega
}G(u)\,v\,dx\,dy,  \label{eq:bilinear_form}
\end{equation}%
and the linear functional is%
\begin{equation*}
L(v)=\int_{\Omega }f\,v\,dx\,dy.
\end{equation*}%
To ensure the continuity of $L$ on $H_{0}^{1}(\Omega )$, we assume $f\in
L^{2}(\Omega )$. In practice, since images are typically normalized (e.g.,
pixel values in $[0,1]$), one typically has $f\in L^{\infty }(\Omega )$,
which implies $f\in L^{2}(\Omega )$ for bounded domains.

\begin{proposition}[\textbf{Nonlocal Positivity Principle}]
\label{wmp} Let $\Omega \subset \mathbb{R}^{2}$ be a bounded, open,
connected domain with Lipschitz boundary. Let $K:\Omega \times \Omega
\rightarrow (0,\infty )$ satisfy conditions (i)--(ii) (integrability,
symmetry, and boundedness). Define $G:L^{2}(\Omega )\rightarrow L^{2}(\Omega
)$ by (\ref{eq:G_def}) and let $\lambda >0$. Assume further that $f\in
L^{2}(\Omega )$ satisfies $f(z)\geq 0$ a.e.\ in $\Omega $. If the boundary
value problem%
\begin{equation*}
\left\{ 
\begin{array}{ll}
-\Delta u+\lambda \,G(u)=f & \text{in }\Omega , \\ 
u=0 & \text{on }\partial \Omega ,%
\end{array}%
\right. 
\end{equation*}%
admits a unique weak solution $u\in H_{0}^{1}(\Omega )$, then $u(z)\geq
0\quad $for a.e. $z\in \Omega $.
\end{proposition}

\begin{proof}
We use the method of testing with the negative part. Define%
\begin{equation*}
u^{-}:=\max (-u,0)\geq 0,\qquad u^{+}:=\max (u,0)\geq 0,
\end{equation*}%
so that $u=u^{+}-u^{-}$ and $|u|=u^{+}+u^{-}$. Since $u\in H_{0}^{1}(\Omega )
$, standard properties of Sobolev spaces imply $u^{-}\in H_{0}^{1}(\Omega )$
and%
\begin{equation*}
\nabla u^{-}=-\nabla u\,\mathbf{1}_{\{u<0\}}\quad \text{a.e.\ in }\Omega ,
\end{equation*}%
where $\mathbf{1}_{\{u<0\}}$ denotes the characteristic function of the set $%
\{z\in \Omega :u(z)<0\}$.

\medskip \noindent \textbf{Step 1: Testing with the negative part.} The weak
formulation reads%
\begin{equation*}
\int_{\Omega }\nabla u\cdot \nabla v\,dz+\lambda \int_{\Omega
}G(u)\,v\,dz=\int_{\Omega }f\,v\,dz\qquad \forall \,v\in H_{0}^{1}(\Omega ).
\end{equation*}%
Choosing $v=-u^{-}$, we get 
\begin{equation}
-\int_{\Omega }\nabla u\cdot \nabla u^{-}\,dz-\lambda \int_{\Omega
}G(u)\,u^{-}\,dz=-\int_{\Omega }f\,u^{-}\,dz.  \label{eq:test-neg}
\end{equation}

\medskip \noindent \textbf{Step 2: The gradient term.} On the set $\{u<0\}$,
we have $\nabla u^{-}=-\nabla u$, while on $\{u\geq 0\}$, $\nabla u^{-}=0$,
hence 
\begin{equation*}
-\int_{\Omega }\nabla u\cdot \nabla u^{-}\,dz=\int_{\{u<0\}}|\nabla
u|^{2}\,dz=\int_{\Omega }|\nabla u^{-}|^{2}\,dz\geq 0.
\end{equation*}%
Thus \eqref{eq:test-neg} becomes 
\begin{equation}
\int_{\Omega }|\nabla u^{-}|^{2}\,dz-\lambda \int_{\Omega
}G(u)\,u^{-}\,dz=-\int_{\Omega }f\,u^{-}\,dz\leq 0,  \label{eq:key}
\end{equation}%
because $f\geq 0$ and $u^{-}\geq 0$.

\medskip \noindent \textbf{Step 3: Sign of the nonlocal term.} Substituting $%
u=u^{+}-u^{-}$ into the definition of $G(u)$, we have 
\begin{equation*}
G(u)(z)=\int_{\Omega }K(z,\theta )\,u(\theta )\,d\theta =\int_{\Omega
}K(z,\theta )\,\left( u^{+}(\theta )-u^{-}(\theta )\right) \,d\theta .
\end{equation*}%
Therefore, 
\begin{align*}
\int_{\Omega }G(u)\,u^{-}\,dz& =\int_{\Omega }\left( \int_{\Omega
}K(z,\theta )\,\left( u^{+}(\theta )-u^{-}(\theta )\right) \,d\theta \right)
u^{-}(z)\,dz \\
& =\int_{\Omega }\int_{\Omega }K(z,\theta )\,u^{+}(\theta
)\,u^{-}(z)\,d\theta \,dz \\
& \quad -\int_{\Omega }\int_{\Omega }K(z,\theta )\,u^{-}(\theta
)\,u^{-}(z)\,d\theta \,dz.
\end{align*}%
By Fubini's theorem and the symmetry of $K$, i.e., $K(z,\theta )=K(\theta ,z)
$, we can interchange the roles of $z$ and $\theta $ in the first integral: 
\begin{align*}
\int_{\Omega }\int_{\Omega }K(z,\theta )\,u^{+}(\theta )\,u^{-}(z)\,d\theta
\,dz& =\int_{\Omega }\int_{\Omega }K(\theta ,z)\,u^{+}(z)\,u^{-}(\theta
)\,dz\,d\theta  \\
& =\int_{\Omega }\int_{\Omega }K(z,\theta )\,u^{+}(z)\,u^{-}(\theta
)\,d\theta \,dz.
\end{align*}%
Since $u^{+}$ and $u^{-}$ have disjoint supports (i.e., $u^{+}(z)\,u^{-}(z)=0
$ for all $z$), we have 
\begin{equation*}
\int_{\Omega }\int_{\Omega }K(z,\theta )\,u^{+}(\theta )\,u^{-}(z)\,d\theta
\,dz=\int_{\Omega }\int_{\Omega }K(z,\theta )\,u^{+}(z)\,u^{-}(\theta
)\,d\theta \,dz=0.
\end{equation*}%
Thus, 
\begin{equation}
\int_{\Omega }G(u)\,u^{-}\,dz=-\int_{\Omega }\int_{\Omega }K(z,\theta
)\,u^{-}(\theta )\,u^{-}(z)\,d\theta \,dz\leq 0,
\label{eq:nonlocal_negative}
\end{equation}%
since $K(z,\theta )>0$, $u^{-}(z)\geq 0$, and $u^{-}(\theta )\geq 0$, so that%
\begin{equation*}
\int_{\Omega }G(u)\,u^{-}\,dz\leq 0\quad \Rightarrow \quad -\,\lambda
\int_{\Omega }G(u)\,u^{-}\,dz\geq 0.
\end{equation*}

\medskip \noindent \textbf{Step 4: Conclusion.} From \eqref{eq:key} we have%
\begin{equation*}
\underbrace{\int_{\Omega }|\nabla u^{-}|^{2}\,dz}_{\geq 0}\;+\;\underbrace{%
\bigl(-\lambda \int_{\Omega }G(u)\,u^{-}\,dz\bigr)}_{\geq 0}\;=-\int_{\Omega
}f\,u^{-}\,dz\leq \;0.
\end{equation*}%
The only way this equation can hold is if all three terms are zero. 
\begin{equation*}
\int_{\Omega }|\nabla u^{-}|^{2}\,dz=0,\qquad -\lambda \int_{\Omega
}G(u)\,u^{-}\,dz=0,\qquad -\int_{\Omega }f\,u^{-}\,dz=0.
\end{equation*}%
Thus $\nabla u^{-}=0$ a.e. in $\Omega $, so $u^{-}$ is a.e.\ constant. Since 
$u^{-}\in H_{0}^{1}(\Omega )$, its trace on $\partial \Omega $ is zero, and
therefore $u^{-}\equiv 0$ a.e.\ in $\Omega $.

It follows that%
\begin{equation*}
u(z)\geq 0\quad \text{for a.e. }z\in \Omega ,
\end{equation*}%
which proves the desired nonnegativity of the weak solution.
\end{proof}

Next we prove the following result:

\begin{theorem}
\label{thm:existence} Assume:

\begin{enumerate}
\item $f\in L^{2}(\Omega )$ satisfies $f\left( x\right) \geq 0$ a.e.\ in $%
\Omega $; 

\item the nonlocal operator $G:H_{0}^{1}(\Omega )\rightarrow L^{2}(\Omega )$
satisfies the boundedness condition of Proposition~\ref{prop:nonlocal}:%
\begin{equation*}
\Vert G(u)\Vert _{L^{2}(\Omega )}\leq M\,\Vert u\Vert _{L^{2}(\Omega
)},\quad \forall u\in H_{0}^{1}(\Omega ),
\end{equation*}

\item the regularization parameter $\lambda >0$.
\end{enumerate}

Then the bilinear form $a(\cdot ,\cdot )$ in \eqref{eq:bilinear_form} is
continuous and coercive on $H_{0}^{1}(\Omega )$. Consequently, by the
Lax--Milgram theorem, there exists a unique weak solution $u\in
H_{0}^{1}(\Omega )$ to \eqref{eq:pde}, and moreover $u\left( x\right) \geq 0$
for~almost~every $x\in \Omega $.
\end{theorem}

\begin{proof}
\textbf{Continuity.} For any $u,v\in H_{0}^{1}(\Omega )$,%
\begin{equation*}
\left\vert \int_{\Omega }\nabla u\cdot \nabla v\right\vert \leq \Vert \nabla
u\Vert _{L^{2}(\Omega )}\,\Vert \nabla v\Vert _{L^{2}(\Omega )}.
\end{equation*}%
For the nonlocal term, by Proposition~\ref{prop:nonlocal},%
\begin{equation*}
\lambda \left\vert \int_{\Omega }G(u)\,v\right\vert \leq \lambda \,\Vert
G(u)\Vert _{L^{2}(\Omega )}\,\Vert v\Vert _{L^{2}(\Omega )}\leq \lambda
M\,\Vert u\Vert _{L^{2}(\Omega )}\,\Vert v\Vert _{L^{2}(\Omega )}.
\end{equation*}%
By the Poincar\'{e} inequality, $\Vert w\Vert _{L^{2}(\Omega )}\leq
C_{P}\Vert \nabla w\Vert _{L^{2}(\Omega )}$ for all $w\in H_{0}^{1}(\Omega )$%
, so there exists $C>0$ (depending on $\lambda $, $M$, $C_{P}$) such that%
\begin{equation*}
|a(u,v)|\leq C\,\Vert u\Vert _{H^{1}(\Omega )}\,\Vert v\Vert _{H^{1}(\Omega
)}.
\end{equation*}%
Thus $a$ is continuous. \medskip

\textbf{Coercivity.} Since $G(u)\geq 0$ a.e.,%
\begin{equation*}
a(u,u)=\Vert \nabla u\Vert _{L^{2}(\Omega )}^{2}+\lambda \int_{\Omega
}G(u)\,u\,dx\,dy\geq \Vert \nabla u\Vert _{L^{2}(\Omega )}^{2}.
\end{equation*}%
By Poincar\'{e}, $\Vert \nabla u\Vert _{L^{2}(\Omega )}^{2}\geq \alpha
\,\Vert u\Vert _{H^{1}(\Omega )}^{2}$ for some $\alpha >0$, hence $%
a(u,u)\geq \alpha \Vert u\Vert _{H^{1}(\Omega )}^{2}$. \medskip

\textbf{Existence and Uniqueness.} By Lax--Milgram, there exists a unique $%
u\in H_{0}^{1}(\Omega )$ solving $a(u,v)=L(v)$ for all $v\in
H_{0}^{1}(\Omega )$. \medskip

\textbf{Positivity via Proposition \ref{wmp}. }Since $f\geq 0$ almost
everywhere in $\Omega $, the Nonlocal Positivity Principle (Proposition \ref%
{wmp}) implies that $u\left( z\right) \geq 0$ for a.e. $z\in \Omega $.

\textbf{Solution localization by means of sub- and supersolutions.}

\begin{itemize}
\item \emph{Subsolution:} $\underline{u} \equiv 0$ satisfies 
\begin{equation*}
-\Delta \underline{u} + \lambda G(\underline{u}) = 0 \leq f \quad \text{in }
\Omega,
\end{equation*}
with $\underline{u} = 0$ on $\partial\Omega$.

\item \emph{Supersolution:} Let $w$ solve%
\begin{equation*}
\begin{cases}
-\Delta w=f & \text{in }\Omega , \\ 
w=0 & \text{on }\partial \Omega .%
\end{cases}%
\end{equation*}%
By the classical maximum principle, $w\geq 0$ in $\Omega $. Since $G(w)\geq
0 $,%
\begin{equation*}
-\Delta w=f\geq f-\lambda G(w)\quad \text{in }\Omega ,
\end{equation*}%
so $w$ is a supersolution.
\end{itemize}

From the monotone iteration scheme developed in the part two of the paper
(see also \cite{AlvesCovei2015}),  one obtains the existence of a weak
solution $u$ satisfying $0\leq u$ a.e.\ in $\Omega $. graphical
representation of the subsolution, supersolution, and the resulting
numerical construction of the solution shows that $0\leq u\leq w$ a.e.\ in $%
\Omega $. Moreover, the assumptions on $f$ stated in the preamble of the
paper yield $u\not\equiv 0$ and $u>0$ in $\overline{\Omega }\setminus \Omega
_{0}$. 
\end{proof}

\medskip \textbf{Numerical Approximation Remark.} In the following sections, the weak form \eqref{eq:bilinear_form} is approximated using finite differences for the Laplacian and an integral quadrature or discrete convolution for $G(u)$. This ensures that the numerical bilinear form preserves the symmetry and positivity properties established above.

\subsection{PDE Formulation and Variational Consistency}

Define the bilinear form 
\begin{equation}
B(u,v):=\int_{\Omega }\int_{\Omega }K(z,\theta )\,u(\theta )\,v(z)\,d\theta
\,dz,\qquad u,v\in L^{2}(\Omega ).  \label{eq:Bdef}
\end{equation}%
Under the aforementioned assumptions, $G$ is bounded on $L^{2}(\Omega )$ and
satisfy 
\begin{equation*}
|B(u,v)|\leq M\,\Vert u\Vert _{L^{2}(\Omega )}\,\Vert v\Vert _{L^{2}(\Omega
)}
\end{equation*}%
by Schur's test; thus, all subsequent expressions are well-defined for $%
u,v\in L^{2}(\Omega )$. We consider the energy functional $J:
H_{0}^{1}(\Omega) \to \mathbb{R}$ given by 
\begin{equation}
J(u) = \frac{1}{2} \int_{\Omega} |\nabla u(z)|^{2} \, dz \;+\; \frac{\lambda%
}{2} \, B(u,u) \;-\; \int_{\Omega} f(z) \, u(z) \, dz,  \label{eq:Jdef}
\end{equation}
where $\lambda > 0$ and $f \in L^{2}(\Omega)$.

\begin{proposition}[First variation and Euler--Lagrange equation]
Let $J$ be defined as in \eqref{eq:Jdef}, with $B$ and $G$ as in %
\eqref{eq:Bdef} and \eqref{eq:G_def}, respectively. Then $J$ is Fr\'{e}chet
differentiable on $H_{0}^{1}(\Omega)$, and its first variation at $u$ in the
direction $v\in H_{0}^{1}(\Omega)$ is 
\begin{equation}
J^{\prime }(u)[v]\;=\;\int_{\Omega }\nabla u\cdot \nabla v\,dz\;+\;\lambda
\int_{\Omega }v(z)\left( \int_{\Omega }K_{s}(z,\theta )\,u(\theta )\,d\theta
\right) dz\;-\;\int_{\Omega }f\,v\,dz,  \label{eq:Jprime_general}
\end{equation}%
where%
\begin{equation*}
K_{s}(z,\theta ):=\tfrac{1}{2}\big(K(z,\theta )+K(\theta ,z)\big)
\end{equation*}
denotes the symmetrized kernel. Hence, 
\begin{equation}
\frac{\delta J}{\delta u}(z)\;=\;-\Delta u(z)\;+\;\lambda \int_{\Omega
}K_{s}(z,\theta )\,u(\theta )\,d\theta \;-\;f(z)\quad \text{in }%
H^{-1}(\Omega ).  \label{eq:var_derivative_general}
\end{equation}%
Specifically, if $K$ is symmetric a.e.\ on $\Omega \times \Omega $, i.e., $%
K(z,\theta )=K(\theta ,z)$, then $K_{s}=K$ and 
\begin{equation}
\frac{\delta J}{\delta u}(z)\;=\;-\Delta u(z)\;+\;\lambda \,G(u)(z)\;-\;f(z),
\label{eq:var_derivative_symmetric}
\end{equation}%
such that the Euler--Lagrange equation $\delta J/\delta u=0$ corresponds
exactly to the weak formulation of 
\begin{equation*}
-\Delta u+\lambda G(u)=f\quad \text{in }\Omega ,\qquad u|_{\partial \Omega
}=0.
\end{equation*}
\end{proposition}

\begin{proof}
Linearity and boundedness of $B(\cdot ,\cdot )$ follow from Schur's test,
hence $u\mapsto B(u,u)$ is well-defined and continuous on $L^{2}(\Omega )$.
Define $F(u):=\tfrac{1}{2}B(u,u)$. For $u,v\in L^{2}(\Omega )$ and $t\in 
\mathbb{R}$,%
\begin{equation*}
F(u+tv)=\frac{1}{2}B(u+tv,u+tv)=\frac{1}{2}\big(B(u,u)+2tB(u,v)+t^{2}B(v,v)%
\big).
\end{equation*}
Thus the G\^{a}teaux derivative of $F$ at $u$ in the direction $v$ is%
\begin{equation*}
F^{\prime }(u)[v]=\lim_{t\rightarrow 0}\frac{F(u+tv)-F(u)}{t}=B(u,v).
\end{equation*}
By polarization,%
\begin{equation*}
B(u,v)=\frac{1}{2}\big(B(u,v)+B(v,u)\big)=\int_{\Omega }v(z)\left(
\int_{\Omega }K_{s}(z,\theta )\,u(\theta )\,d\theta \right) dz,
\end{equation*}
with $K_{s}:=\tfrac{1}{2}(K+K^{\top })$. Therefore, 
\begin{equation}
F^{\prime }(u)[v]\;=\;\int_{\Omega }v(z)\left( \int_{\Omega }K_{s}(z,\theta
)\,u(\theta )\,d\theta \right) dz.  \label{eq:Fprime}
\end{equation}%
The gradient term satisfies%
\begin{equation*}
\frac{d}{dt}\bigg|_{t=0}\frac{1}{2}\int_{\Omega }|\nabla
(u+tv)|^{2}dz=\int_{\Omega }\nabla u\cdot \nabla v\,dz,
\end{equation*}
and the fidelity term is linear:%
\begin{equation*}
\frac{d}{dt}\bigg|_{t=0}\left( -\int_{\Omega }f(u+tv)\,dz\right)
=-\int_{\Omega }f\,v\,dz.
\end{equation*}
Combining these yields \eqref{eq:Jprime_general}. Interpreting $J^{\prime
}(u)[\cdot]$ as an element of $H^{-1}(\Omega)$ via the Riesz representation theorem yields the distributional derivative \eqref{eq:var_derivative_general}.
\end{proof}
\paragraph{Extension and Boundary Consistency.}

The numerical evaluation of nonlocal terms and differential operators near the boundary of $\Omega$ requires defining the function $u$ on an extended domain $\widetilde{\Omega}$. The next section formalizes the concept of extension operators, which allow for a consistent treatment of boundary effects in the discrete scheme.

\section{Extension and Padding Operators\label{sec:padding}}

Let $\Omega \subset \mathbb{R}^n$ be the domain of interest and let $\widetilde{%
\Omega} \supset \Omega$ be an \emph{extension domain}. Given $%
u:\Omega \to \mathbb{R}$, an extension (or padding) strategy is a linear operator
\begin{equation*}
E:L^{2}(\Omega )\longrightarrow L^{2}(\widetilde{\Omega }),\qquad u\mapsto 
\widetilde{u}:=E(u),
\end{equation*}%
that defines values of $u$ outside $\Omega $. Denote by $R:L^{2}(\widetilde{%
\Omega })\rightarrow L^{2}(\Omega )$ the restriction operator, $%
(Rw)(z):=w(z) $ for $z\in \Omega $. Given a kernel $K: \widetilde{\Omega}
\times \widetilde{\Omega} \to \mathbb{R}$, define the nonlocal operator on $%
\widetilde{\Omega}$ by 
\begin{equation*}
\widetilde{G}(w)(z) \;=\; \int_{\widetilde{\Omega}} K(z,\theta) \, w(\theta)
\, d\theta, \qquad z \in \widetilde{\Omega}.
\end{equation*}
The \emph{padded} nonlocal operator acting on $u$ over $\Omega$ is 
\begin{equation}
G_{\mathrm{pad}}(u) \;:=\; R\big( \widetilde{G}(E(u)) \big), \qquad G_{%
\mathrm{pad}}(u)(z) \;=\; \int_{\widetilde{\Omega}} K(z,\theta) \, 
\widetilde{u}(\theta) \, d\theta, \quad z \in \Omega.  \label{eq:Gpad}
\end{equation}
The PDE then reads (weakly on $\Omega$) 
\begin{equation}
-\Delta u(z) \;+\; \lambda \, G_{\mathrm{pad}}(u)(z) \;=\; f(z), \qquad z
\in \Omega,  \label{eq:PDEpad}
\end{equation}
with homogeneous Dirichlet trace $u|_{\partial\Omega} = 0$ unless stated
otherwise.

\subsection{Canonical padding operators on rectangular domains}

Assume $\Omega = (0,1)^2$ and $\widetilde{\Omega} = \mathbb{R}^2$. For $x =
(x_1,x_2) \in \mathbb{R}^2$ define:

\begin{itemize}
\item \textbf{Zero (Dirichlet) padding:}%
\begin{equation*}
E_{0}(u)(x)\;=\;\mathbf{1}_{\Omega }(x)\,u(x)\;=\;%
\begin{cases}
u(x), & x\in \Omega , \\ 
0, & x\notin \Omega .%
\end{cases}%
\end{equation*}

\item \textbf{Replicate (clamped) padding:} Let%
\begin{equation*}
c(t):=\min \{1,\max \{0,t\}\},\quad c(x):=(c(x_{1}),c(x_{2})),
\end{equation*}
and set $E_{\mathrm{rep}}(u)(x):=u(c(x))$.

\item \textbf{Reflect (even) padding:} Let%
\begin{equation*}
p(t):=t-2\lfloor t/2\rfloor \in \lbrack 0,2),\quad r(t):=%
\begin{cases}
p(t), & p(t)\in \lbrack 0,1], \\ 
2-p(t), & p(t)\in (1,2),%
\end{cases}%
\end{equation*}
with $r(x):=(r(x_{1}),r(x_{2}))$. Set $E_{\mathrm{ref}}(u)(x):=u(r(x))$.

\item \textbf{Periodic (wrap) padding:} Let%
\begin{equation*}
q(t):=t-\lfloor t\rfloor \in \lbrack 0,1),\quad q(x):=(q(x_{1}),q(x_{2})),
\end{equation*}
and set $E_{\mathrm{per}}(u)(x):=u(q(x))$.
\end{itemize}

\subsection{Boundary-condition interpretation}

For isotropic kernels $K(z,\theta) = \kappa(|z-\theta|)$:

\begin{itemize}
\item $E_{0}$ corresponds to homogeneous Dirichlet behavior for the nonlocal
term.

\item $E_{\mathrm{ref}}$ corresponds to homogeneous Neumann behavior (even
reflection).

\item $E_{\mathrm{per}}$ yields periodic boundary conditions.
\end{itemize}

Thus, the choice of $E$ fixes the effective boundary behavior of both the
nonlocal term and its discrete implementation.

\subsection{Variational formulation with padding}

Assume $K$ satisfies Schur-type bounds on $\widetilde{\Omega} \times 
\widetilde{\Omega}$ ensuring boundedness of $\widetilde{G}: L^{2}(\widetilde{%
\Omega}) \to L^{2}(\widetilde{\Omega})$. Define the padded energy 
\begin{equation*}
J_{\mathrm{pad}}(u) \;=\; \frac12 \int_{\Omega} |\nabla u|^{2} \, dz \;+\; 
\frac{\lambda}{2} \int_{\Omega} \int_{\widetilde{\Omega}} K(z,\theta) \,
E(u)(\theta) \, u(z) \, d\theta \, dz \;-\; \int_{\Omega} f \, u \, dz.
\end{equation*}
Then $J_{\mathrm{pad}}$ is G\^ateaux differentiable on $H_{0}^{1}(\Omega)$
and its first variation satisfies 
\begin{equation*}
\frac{\delta J_{\mathrm{pad}}}{\delta u}(z)\;=\;-\Delta u(z)\;+\;\lambda
\,G_{\mathrm{pad}}(u)(z)\;-\;f(z)\quad \text{in }H^{-1}(\Omega ),
\end{equation*}%
so the Euler--Lagrange equation $\delta J_{\mathrm{pad}}/\delta u=0$ is
exactly \eqref{eq:PDEpad}. If $K$ is symmetric a.e.\ on $\widetilde{\Omega }%
\times \widetilde{\Omega }$, then $G_{\mathrm{pad}}$ is self-adjoint on $%
L^{2}(\Omega )$ as $R\widetilde{G}E$ with $E,R$ bounded.

\section{Numerical Discretization and Fixed-Point Analysis\label{III}}

In two dimensions, the Laplacian is given by 
\begin{equation*}
\Delta u(x,y)=\frac{\partial ^{2}u}{\partial x^{2}}(x,y)+\frac{\partial ^{2}u%
}{\partial y^{2}}(x,y),
\end{equation*}%
and on a uniform grid ($h=1$), we discretize using central differences: 
\begin{equation}
\Delta u_{i,j}\;\approx \;u_{i+1,j}+u_{i-1,j}+u_{i,j+1}+u_{i,j-1}-4\,u_{i,j}.
\label{dis1}
\end{equation}
We solve the elliptic problem 
\begin{equation*}
-\Delta u+\lambda \,G(u)=f
\end{equation*}%
by the fixed-point iteration 
\begin{equation}
u^{(n+1)}=u^{(n)}+\tau \Bigl(f+\Delta u^{(n)}-\lambda \,G(u^{(n)})\Bigr),
\label{eq:iteration1}
\end{equation}%
enforcing $u^{(n)}|_{\partial \Omega }=0$ at each step. Equivalently, define 
\begin{equation}
T(u)\;=\;u+\tau \Bigl(f+\Delta u-\lambda \,G(u)\Bigr),\quad \tau >0,
\label{fo2}
\end{equation}%
so that iterates satisfy $u^{(n+1)}=T(u^{(n)})$. Convergence follows if $T$
is a contraction in a suitable norm.

\begin{theorem}[Contraction of the Fixed-Point Iteration]
\label{cont} Suppose the hypotheses on $G$ and $f$ from Theorem \ref%
{thm:existence} and let $T$ be as in \eqref{fo2}. In addition, assume $G$ is
Lipschitz with constant $L_{G}$ and choose $\tau >0$ so that%
\begin{equation*}
0<\tau <\frac{2}{(\Vert \Delta \Vert +\lambda L_{G})^{2}}\left( \frac{1}{%
C_{P}^{2}}-\lambda L_{G}\right) ,\qquad 
\end{equation*}%
where the regularization parameter $\lambda >0$ satisfies%
\begin{equation*}
\lambda <\frac{1}{L_{G}C_{P}^{2}},
\end{equation*}%
and $C_{P}$ is the Poincar\'{e} constant on $\Omega $. Then $T$ is a
contraction in the $L^{2}\left( \Omega \right) $ (or $H^{1}\left( \Omega
\right) $) norm, and the sequence $\{u^{(n)}\}$ in \eqref{eq:iteration1}
converges to the unique weak positive solution of%
\begin{equation*}
-\Delta u+\lambda \,G(u)=f\quad \text{in }\Omega ,\quad u=0\ \text{on }%
\partial \Omega .
\end{equation*}
\end{theorem}

\begin{proof}
\textbf{Step 1: Error propagation.} Let $w^{(n)}=u^{(n)}-u$, where $u$ is
the exact solution. Subtracting the fixed-point update from $u$ yields 
\begin{equation}
w^{(n+1)}=w^{(n)}+\tau \left[ \Delta w^{(n)}-\lambda \big(G(u^{(n)})-G(u) %
\big)\right].  \label{eq:error}
\end{equation}

\textbf{Step 2: Energy estimate.} Taking the $L^{2}$-norm squared in %
\eqref{eq:error} gives 
\begin{align}
\Vert w^{(n+1)}\Vert ^{2}& =\Vert w^{(n)}\Vert ^{2}+2\tau \langle
w^{(n)},\Delta w^{(n)}-\lambda (G(u^{(n)})-G(u))\rangle \\
& \quad +\tau ^{2}\Vert \Delta w^{(n)}-\lambda (G(u^{(n)})-G(u))\Vert ^{2}.
\label{eq:energy}
\end{align}

\textbf{Step 3: Term-by-term bounds.}

\begin{itemize}
\item[(a)] \emph{Laplacian term:} By integration by parts and Poincar\'{e},%
\begin{equation*}
\langle w^{(n)},\Delta w^{(n)}\rangle =-\Vert \nabla w^{(n)}\Vert ^{2}\leq -%
\frac{1}{C_{P}^{2}}\Vert w^{(n)}\Vert ^{2}.
\end{equation*}

\item[(b)] \emph{Nonlocal term:} Lipschitz continuity of $G$ gives%
\begin{equation*}
\Vert G(u^{(n)})-G(u)\Vert \leq L_{G}\Vert w^{(n)}\Vert ,
\end{equation*}
so 
\begin{equation*}
2\tau \lambda \langle w^{(n)},-(G(u^{(n)})-G(u))\rangle \leq 2\tau \lambda
L_{G}\Vert w^{(n)}\Vert ^{2}.
\end{equation*}

\item[(c)] \emph{Second-order term:} With $L=\Vert \Delta \Vert +\lambda
L_{G}$,%
\begin{equation*}
\Vert \Delta w^{(n)}-\lambda (G(u^{(n)})-G(u))\Vert \leq L\Vert w^{(n)}\Vert
,
\end{equation*}%
so the last term in \eqref{eq:energy} is bounded by $\tau ^{2}L^{2}\Vert
w^{(n)}\Vert ^{2}$.
\end{itemize}

\textbf{Step 4: Contractivity.} Combining the above,%
\begin{equation*}
\Vert w^{(n+1)}\Vert ^{2}\leq \left[ 1-\frac{2\tau }{C_{P}^{2}}+2\tau
\lambda L_{G}+\tau ^{2}L^{2}\right] \Vert w^{(n)}\Vert ^{2}.
\end{equation*}%
Requiring the bracket to be $<1$ yields the stated condition on $\tau $ and
the necessary assumption 
\begin{equation}
\frac{1}{C_{P}^{2}}>\lambda L_{G}.  \label{nc}
\end{equation}%
Under these, $T$ is a contraction and Banach's fixed-point theorem gives
convergence to the unique weak solution.
\end{proof}

\begin{remark}
The condition \eqref{nc} links the geometry of $\Omega$ (via $C_{P}$) with
the parameter $\lambda$ and the operator $G$. In practice,
one chooses $\tau$ to satisfy this stability inequality.
\end{remark}

\subsection{Alternative View: Explicit Euler for a Gradient Flow}

The fixed-point iteration \eqref{eq:iteration1} can also be viewed as an
explicit Euler discretization of the gradient descent flow associated with
the energy functional $J(u)$. In this interpretation, 
\begin{equation*}
u^{(n+1)}=u^{(n)}-\tau \,\nabla J(u^{(n)})
\end{equation*}%
requires $\tau $ to be sufficiently small relative to the Lipschitz constant
of $\nabla J$. The stability condition derived above ensures numerical
stability in this gradient-flow perspective as well.

\subsection{Numerical Experiments and Verification}

We present several numerical experiments to validate the theoretical findings. We consider the domain $\Omega = (0,1)^2$ and solve the nonlocal elliptic equation \eqref{eq:PDE} using the iterative scheme described in Theorem~\ref{cont}.

\begin{figure}[H]
\centering
\subfloat{\includegraphics[width=0.6\textwidth]{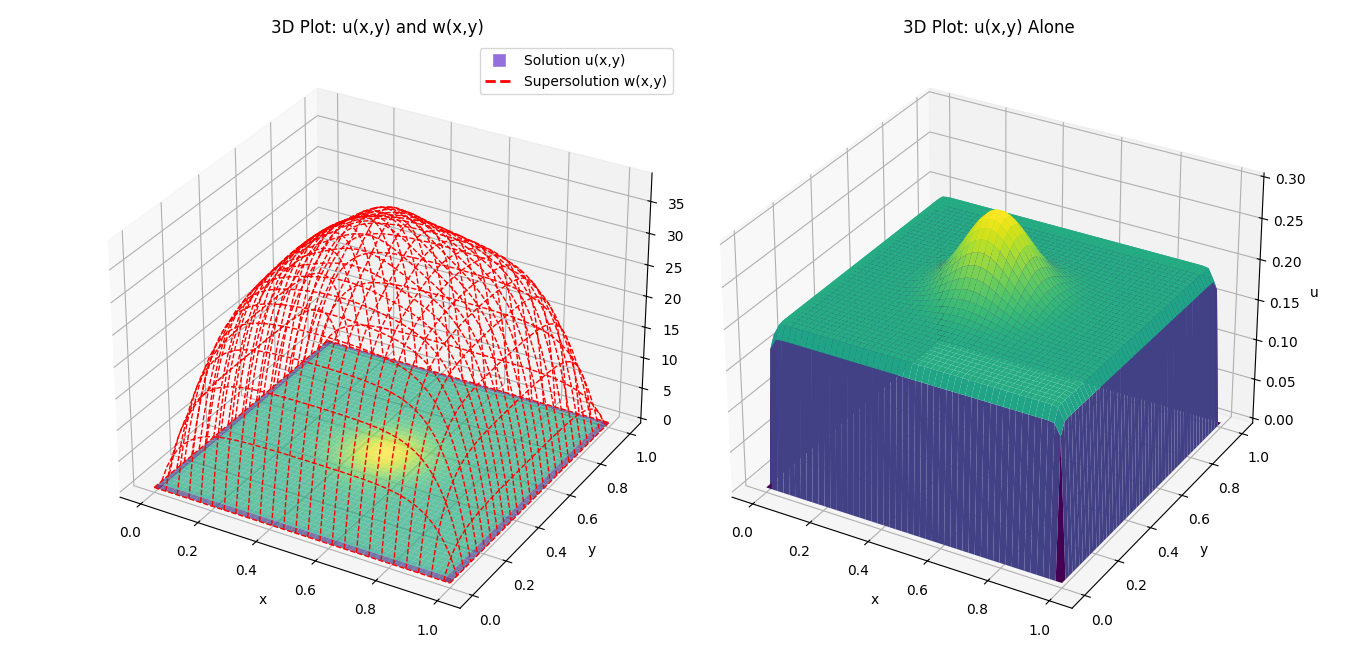}}
\caption{Three-dimensional visualization of the numerical solution $u(x,y)$
(solid surface) and the corresponding supersolution $w(x,y)$ (wireframe) over the domain $%
\Omega = [0,1] \times [0,1]$.}
\label{fig:3Dsurface}
\end{figure}

The smooth profile of the solution $u$ reflects the combined regularization of the Laplacian and the nonlocal Gaussian kernel. The close correspondence between the solution and the constructed supersolution confirms the validity of the monotone iteration approach.

\begin{figure}[H]
\centering
\includegraphics[width=0.7\textwidth]{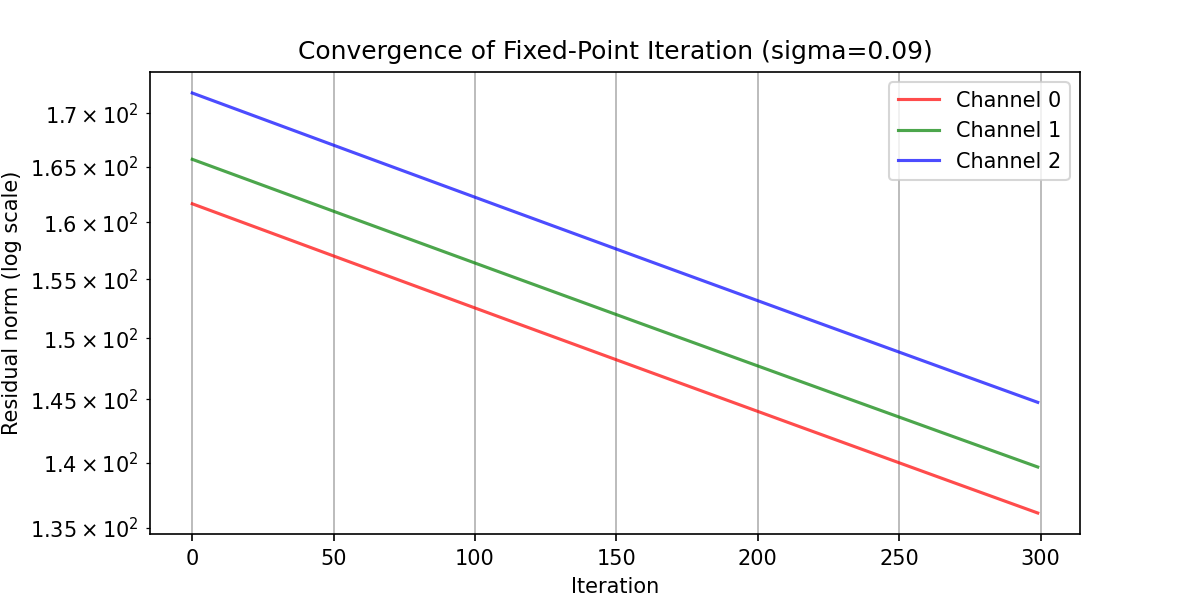}
\caption{Convergence of the fixed-point iteration: residual norm $\|r^{(k)}\|$ versus iteration number for various parameter configurations. The monotonic decay validates the contraction conditions established in Theorem~\ref{cont}.}
\label{fig:convergence}
\end{figure}

\section{Conclusion and Future Work\label{IV}}

This study presented a comprehensive analysis of a class of nonlocal elliptic equations, combining a rigorous mathematical framework with a practical numerical scheme. The formulation accommodates a general nonlocal operator $G$ defined by convolution with a kernel $K$. Under standard integrability and boundedness conditions on $K$, we established that $G$ is a bounded operator on $L^2(\Omega)$. These analytical findings, combined with classical energy estimates for the Laplacian, allow the application of the Lax--Milgram theorem to guarantee the existence and uniqueness of the weak solution in $H_0^1(\Omega)$.

The numerical discretization utilized finite differences for the local operator and discrete convolution for the nonlocal term. By incorporating extension operators, we ensured consistency near the boundaries, preventing numerical artifacts that often arise in nonlocal problems on bounded domains. The stability of the fixed-point iterative scheme was rigorously analyzed, providing explicit conditions on the step size $\tau$ and the regularization parameter $\lambda$ based on the spectral properties of the operators.

\medskip \noindent \textbf{Summary of contributions:}
\begin{itemize}
\item Established a rigorous mathematical foundation for the nonlocal elliptic model.
\item Proved well-posedness (existence, uniqueness, and positivity) via variational methods.
\item Formalized the role of padding as extension operators for nonlocal consistency.
\item Derived explicit stability and convergence conditions for the iterative solver.
\item Validated the theory through numerical experiments on bounded domains.
\end{itemize}

\medskip \noindent \textbf{Future directions:}
\begin{itemize}
\item \emph{Non-symmetric kernels:} Extend the analysis to kernels where $K(z,\theta) \neq K(\theta,z)$, exploring the resulting loss of self-adjointness.
\item \emph{Adaptive regularization:} Investigate spatially-varying parameters $\lambda(z)$ to handle non-homogeneous source terms.
\item \emph{High-order schemes:} Explore spectral methods or higher-order finite elements for improved accuracy in the nonlocal term approximation.
\item \emph{Multidimensional applications:} Apply the framework to higher-dimensional problems in nonlocal mechanics or population dynamics.
\end{itemize}

In conclusion, the proposed nonlocal elliptic model strikes a balance between local diffusion and global interaction. By unifying rigorous analysis with a consistent numerical implementation, we have provided a versatile mathematical tool suitable for a wide range of nonlocal problems.

\section*{Declaration of Generative AI and AI-assisted technologies in the writing process}
The authors utilized the free AI tool Grammarly to improve the manuscript’s grammar. Following its use, the authors reviewed 
and edited the content as necessary, taking full responsibility for the final publication.
\section*{Declaration of competing interest}
This work does not have any conflicts of interest.
\section*{Funding}
This research did not receive any specific grant from funding agencies in the public, commercial, or not-for-profit sectors.
\section*{Data availability}
No data was used for the research described in the article.
\section*{Acknowledgments}
The author expresses his gratitude to the referees. Some of them contributed substantially to strengthening the theoretical results in the present version, clearly highlighting aspects that had escaped me in the initial form of the paper. Other referees offered valuable suggestions regarding the practical application of the model and its contextualization through comparisons with existing approaches. I thank them once again and assure them that my intention has been to advance the current line of research.

\appendix
\section{Appendix: Numerical Implementation}

Below we provide the Python implementation of the numerical scheme used to validate the mathematical results. The code computes the numerical solution $u(x,y)$ and compares it with the corresponding supersolution $w(x,y)$, ensuring that the theoretical bounds are respected.

\begin{lstlisting}[caption={Python Code for Nonlocal Elliptic PDE Solver}]
import numpy as np
import matplotlib.pyplot as plt
from scipy.ndimage import gaussian_filter
from mpl_toolkits.mplot3d import Axes3D
from matplotlib.lines import Line2D

def compute_laplacian(u):
    lap = np.zeros_like(u)
    lap[1:-1, 1:-1] = (u[2:, 1:-1] + u[:-2, 1:-1] +
                       u[1:-1, 2:] + u[1:-1, :-2] -
                       4 * u[1:-1, 1:-1])
    return lap

def enforce_boundary(u):
    u[0, :], u[-1, :], u[:, 0], u[:, -1] = 0, 0, 0, 0
    return u

def solve_pde(f, lam=5.0, tau=0.1, max_iter=300, tol=1e-4, sigma=0.5):
    u = f.copy()
    u = enforce_boundary(u)
    for it in range(max_iter):
        lap = compute_laplacian(u)
        G_u = gaussian_filter(u, sigma=sigma)
        residual = f + lap - lam * G_u
        u = u + tau * residual
        u = enforce_boundary(u)
        res_norm = np.linalg.norm(residual)
        if res_norm < tol:
            break
    return u

def solve_poisson(f, tau=0.1, max_iter=300, tol=1e-4):
    w = f.copy()
    w = enforce_boundary(w)
    for it in range(max_iter):
        lap = compute_laplacian(w)
        residual = f + lap
        w = w + tau * residual
        w = enforce_boundary(w)
        res_norm = np.linalg.norm(residual)
        if res_norm < tol:
            break
    return w

def main():
    nx, ny = 50, 50  
    x = np.linspace(0, 1, nx)
    y = np.linspace(0, 1, ny)
    X, Y = np.meshgrid(x, y)
    f = 1 + 0.5 * np.exp(-((X - 0.5)**2 + (Y - 0.5)**2) / 0.02)
    u = solve_pde(f, lam=5.0, tau=0.1, max_iter=300, tol=1e-4, sigma=0.5)
    w = solve_poisson(f, tau=0.1, max_iter=300, tol=1e-4)
    fig = plt.figure(figsize=(10, 6))
    ax = fig.add_subplot(111, projection='3d')
    ax.plot_surface(X, Y, u, cmap='viridis', alpha=0.7)
    ax.plot_wireframe(X, Y, w, color='red', linestyle='--', rstride=2, cstride=2)
    plt.show()

if __name__ == '__main__':
    main()
\end{lstlisting}

\end{document}